\documentclass[12pt]{amsart}

\pdfoutput=1

\usepackage{microtype}

\usepackage{verbatim}
\usepackage{amssymb}
\usepackage{upref}
\usepackage[all]{xy}
\usepackage[usenames,dvipsnames]{color}
\usepackage{url}
\usepackage[normalem]{ulem}

\emergencystretch=50pt

\allowdisplaybreaks

\hfuzz=2pt
\vfuzz=2pt

\newtheorem{thm}{Theorem}[section]
\newtheorem*{thm*}{Theorem}
\newtheorem{lem}[thm]{Lemma}
\newtheorem{cor}[thm]{Corollary}
\newtheorem{prop}[thm]{Proposition}

\theoremstyle{definition}
\newtheorem{defn}[thm]{Definition}

\newtheorem*{notn*}{Notation}

\newtheorem*{hyp*}{Hypothesis}

\newtheorem{rem}[thm]{Remark}
\newtheorem*{rem*}{Remark}

\numberwithin{equation}{section}

\newcommand{\secref}[1]{Section~\textup{\ref{#1}}}

\newcommand{\thmref}[1]{Theorem~\textup{\ref{#1}}}

\newcommand{\propref}[1]{Proposition~\textup{\ref{#1}}}

\newcommand{\midtext}[1]{\quad\text{#1}\quad}
\newcommand{\righttext}[1]{\quad\text{#1 }}
\renewcommand{\and}{\midtext{and}}

\renewcommand{\)}{\textup)}

\newcommand{\C}{\mathbb C}

\newcommand{\CC}{\mathcal C}
\newcommand{\DD}{\mathcal D}
\newcommand{\KK}{\mathcal K}

\newcommand{\LL}{\mathcal L}

\renewcommand{\epsilon}{\varepsilon}

\DeclareMathOperator{\ad}{Ad}

\DeclareMathOperator{\mor}{Mor}

\DeclareMathOperator*{\spn}{span}
\DeclareMathOperator*{\clspn}{\overline{\spn}}

\newcommand{\id}{\text{\textup{id}}}

\renewcommand{\>}{\rangle}

\newcommand{\inv}{^{-1}}

\newcommand{\variso}{\overset{\simeq}{\longrightarrow}}

\newcommand{\what}{\widehat}
\newcommand{\wilde}{\widetilde}

\newcommand{\smtx}[1]{\left(\begin{smallmatrix} #1
\end{smallmatrix}\right)}
\newcommand{\mtx}[1]{\begin{pmatrix} #1 \end{pmatrix}}

\newcommand{\sscs}{\mathbf{SS}\!\text{ - }\!\cs}
\newcommand{\ssfcsen}{\mathbf{SSF}\!\text{ - }\!\cse}


\newcommand{\cs}{\mathbf{C}^*}
\newcommand{\csn}{\cs\nd}
\newcommand{\cse}{\cs\en}

\newcommand{\kalgn}{\KK\!\text{ - }\!\csn}
\newcommand{\kalge}{\KK\!\text{ - }\!\cse}

\newcommand{\nd}{_\mathbf{nd}}
\newcommand{\en}{_\mathbf{en}}

\newcommand{\st}{\textup{St}}
\newcommand{\stn}{\st\nd}
\newcommand{\ste}{\st\en}

\newcommand{\dst}{\textup{DSt}}
\newcommand{\dstn}{\dst\nd}

\newcommand{\wst}{\wilde{\st}}
\newcommand{\wstn}{\wst\nd}
\newcommand{\wste}{\wst\en}

\newcommand{\dn}{\downarrow}



\begin{document}
\title{Destabilization}

\author[Kaliszewski]{S.~Kaliszewski}
\address{School of Mathematical and Statistical Sciences
\\Arizona State University
\\Tempe, Arizona 85287}
\email{kaliszewski@asu.edu}
\author[Omland]{Tron Omland}
\address{School of Mathematical and Statistical Sciences
\\Arizona State University
\\Tempe, Arizona 85287}
\email{omland@asu.edu}
\author[Quigg]{John Quigg}
\address{School of Mathematical and Statistical Sciences
\\Arizona State University
\\Tempe, Arizona 85287}
\email{quigg@asu.edu}

\subjclass[2010]{Primary 46L05; Secondary 46L06, 46L08, 46M15}

\keywords{stabilization, compact operators, category equivalence, $C^*$-correspondence}

\date{August 13, 2015}

\begin{abstract}
This partly expository paper first supplies the details of a
method of factoring a stable $C^*$-algebra $A$ as $B\otimes \KK$ in a canonical way.
Then it is shown that this method can be put into a categorical framework, much like the crossed-product dualities,
and that stabilization gives rise to an equivalence between the nondegenerate category of $C^*$-algebras and a category of ``$\KK$-algebras''.
We consider this equivalence as ``inverting''
the stabilization process,
that is, a ``destabilization''.

Furthermore, the method of factoring stable $C^*$-algebras generalizes to Hilbert bimodules,
and an analogous category equivalence between the associated enchilada categories is produced,
giving a destabilization for $C^*$-correspondences.

Finally, we make a connection with (double) crossed-product duality.
\end{abstract}
\maketitle

\section{Introduction}
\label{intro}

We start with some
well-known facts
that nowadays can be found in any textbook on $C^*$-algebras.
The \emph{stabilization} of a $C^*$-algebra $A$ is $A\otimes\KK$, where $\KK$ denotes the compact operators on a separable infinite-dimensional Hilbert space.
Since $\KK\otimes\KK\simeq\KK$,
stabilizing twice does not (up to isomorphism) produce anything new, as the name suggests.
In other words, stabilizations of $C^*$-algebras, are precisely those $C^*$-algebras $A$ such that $A\simeq A\otimes\KK$,
and any $A$ satisfying this property is simply called \emph{stable}.
An obvious question is: how can we characterize stable $C^*$-algebras?
While this problem may have several approaches, our goal is to answer the following:
given a $C^*$-algebra $A$, how can we decide whether there exists \emph{some} $C^*$-algebra $B$ such that $A\simeq B\otimes\KK$,
and then how can we produce such a $B$ in a canonical way?
A trivial answer comes straight from the definition,
namely
there exists such a $B$ if and only if $A\simeq A\otimes\KK$, in which case we can take $B=A$.
This is unsatisfying on two levels.
First of all, the
property
$A\simeq A\otimes\KK$ is not very convenient to check,
and secondly, the choice $B=A$ does not allow for a possibly unstable algebra $B$.
Another,
possibly more useful, characterization of stable $C^*$-algebras that
seems to us to be
folklore
is that
$A$ is stable if and only if there is a nondegenerate copy of $\KK$ in $M(A)$.
Then one can use a choice of matrix units in $\KK$ to decompose $A$ as 
infinite matrices
whose entries come from a $C^*$-algebra $B$, and then $A\simeq B\otimes\KK$.
The challenge is
to produce a $C^*$-algebra that is isomorphic to this $B$ without having to choose matrix units, i.e., canonically.
One way becomes apparent by considering how to pick $B$ out of $B\otimes\KK$.
We have an injection 
from $B$ to $M(B\otimes\KK)$ given by $b\mapsto b\otimes 1_\KK$,
where $1_\KK$ denotes the identity element of $M(\KK)$.
The trick is to identify the image $B\otimes 1_\KK$ inside $M(B\otimes\KK)$.
Obviously $B\otimes 1_\KK$ commutes with $1_B \otimes\KK$, and also multiplies $1_B\otimes\KK$ into $B\otimes\KK$.
This gives the characterization of interest to us, and a short, very rough, summary (using different arguments from those we present here) can be found in
\cite[Section~3]{fischer}.
We feel that it is useful to ``officially'' record the details for convenient reference, 
since it seems difficult to dig them out of the literature,
and moreover we think it is appropriate to make our arguments as elementary as possible.
We give the details in \propref{AK}, after recalling some background material in \secref{prelim}.

In \thmref{destabilize} we parlay the characterization of \propref{AK} into an equivalence between the categories of $C^*$-algebras and of ``$\KK$-algebras''
(stable $C^*$-algebras equipped with a given embedding of $\KK$ --- see \secref{kalg} for the definitions).
Here the morphisms in both categories involve nondegenerate homomorphisms into multiplier algebras.
We then discuss how this category equivalence fits into a general framework we described in \cite[Section~4]{koqlandstad}:
if we consider the stabilization process $B\mapsto A=B\otimes\KK$,
\thmref{destabilize} tells us what extra data we need to recover $B$ from $A$,
i.e., to ``invert the process''.
In \cite{koqlandstad} we study this in a technical manner,
using basic category theory,
and we introduce a concept we call ``good inversion'', where the inverse image of an output of the process is classified up to isomorphism by the automorphisms of the output.
We observe in \secref{equivalence} that \thmref{destabilize} is an example of a good inversion.

In \secref{bimodule} we extend \propref{AK} to Hilbert bimodules,
and in \secref{enchilada} we apply this to make stabilization into an equivalence between enchilada%
\footnote{the term ``enchilada'' is informal, and originated when the authors of the AMS Memoir \cite{enchilada} published a smaller paper \cite{taco} as an introduction to the techniques, and referred to the smaller paper as the ``little taco'' and the memoir as the ``big enchilada''}
categories,
where the morphisms come from $C^*$-correspondences.

Recall that two $C^*$-algebras $A$ and $B$ are \emph{stably isomorphic} if $A\otimes\KK\simeq B\otimes\KK$,
and this condition is stronger than Morita equivalence,
that is, \emph{every} $C^*$-algebra $A$ is stable up to Morita equivalence.
Therefore, one should think of the results of \secref{bimodule} and \secref{enchilada}
as a characterization of which 
$C^*$-correspondences
are stable.

Furthermore, it is a consequence of the Brown-Green-Rieffel theorem \cite{bgr}
that separable $C^*$-algebras are Morita equivalent if and only if they are stably isomorphic.
In light of this,
we briefly compare the nondegenerate and enchilada categories
when restricted to the subcategories of separable stable $C^*$-algebras.

Finally, in an another extended remark at the end
we make a connection with duality for the double crossed product by an action or a coaction.

\section{Preliminaries}\label{prelim}

The category equivalences we will develop in this paper are based upon two categories of $C^*$-algebras,
the nondegenerate and the enchilada.

\begin{defn}
In the \emph{nondegenerate category $\csn$ of $C^*$-algebras},
a morphism $\phi\colon A\to B$
is a nondegenerate homomorphism $\phi\colon A\to M(B)$.
\end{defn}

Before defining the enchilada category, we recall some basic facts about $C^*$-correspondences.

\subsection{$C^*$-Correspondences}

We refer to \cite[Chapters~1--2]{enchilada} for $C^*$-correspondences and their multipliers.
Given $C^*$-algebras $A$ and $B$, recall that an \emph{$A-B$ correspondence} is a (right) Hilbert $B$-module $X$ equipped with a homomorphism $\varphi\colon A\to \LL(X)$.
We frequently write $a\cdot x=\varphi(a)x$.
Further, $X$ is called \emph{nondegenerate} if $A\cdot X=X$,
and \emph{full} if $\clspn\<X,X\>_B=B$.
We will always require our correspondences to be nondegenerate, but we do not need them to be full, i.e., $\clspn\<X,X\>_B$ might be a proper ideal of $B$.
In \cite{enchilada}, nondegenerate $A-B$ correspondence are called \emph{right-Hilbert $A-B$ bimodules}.

A $C^*$-algebra $D$ can be regarded as a correspondence over itself in the standard manner, using the algebraic operations of $D$,
and then if $X$ is a nondegenerate $A-B$ correspondence
the \emph{external tensor product} $X\otimes D$ is a nondegenerate $(A\otimes D)-(B\otimes D)$ correspondence.

An \emph{$A-B$ Hilbert bimodule} is an $A-B$ correspondence $X$ that also is a left Hilbert $A$-module and satisfies the compatibility condition
${}_A\<x,y\>\cdot z=x\cdot \<y,z\>_B$;
\cite{enchilada} would call $X$ a \emph{partial imprimitivity bimodule}.
Note that, just as a Hilbert $B$-module is automatically nondegenerate as a right $B$-module,
an $A-B$ Hilbert bimodule is automatically nondegenerate as an $A-B$ correspondence.
We call an $A-B$ Hilbert bimodule \emph{right-full} if it is full as a Hilbert $B$-module,
and \emph{left-full} if it is full as a left Hilbert $A$-module;
\cite{enchilada} uses the terms \emph{left-partial imprimitivity bimodule}
and \emph{right-partial imprimitivity bimodule}, respectively.
Of course, we call a Hilbert bimodule that is both left- and right-full an \emph{imprimitivity bimodule}.
The \emph{linking algebra} of an $A-B$ Hilbert bimodule $X$ is $L(X)=\smtx{A&X\\\wilde X&B}$.

The \emph{multiplier bimodule} of an $A-B$ correspondence is $M(X)=\LL_B(B,X)$ (see \cite[Definition~1.14]{enchilada}), which is an $M(A)-M(B)$ correspondence in a natural way (see \cite[Proposition~1.10]{enchilada}).

\begin{defn}[{\cite[Chapter~2]{enchilada}}]
The \emph{enchilada category $\cse$ of $C^*$-algebras}
has the same objects as $\csn$,
but now when we say $[X]\colon A\to B$ is a morphism in the category we mean $[X]$ is the isomorphism class of a nondegenerate $A-B$ correspondence $X$.
Composition of morphisms is given by balanced tensor products, and the identity morphisms by the $C^*$-algebras themselves,
viewed as correspondences in the standard way.
\end{defn}

\subsection{Category equivalences and inversions}
    
Recall the following elementary concepts from category theory:
a functor $F\colon \CC\to\DD$ between categories $\CC$ and $\DD$ is
\begin{itemize}
\item
\emph{full} if $F$ maps $\mor(x,y)$ surjectively to $\mor(Fx,Fy)$ for all objects $x,y$ in $\CC$;

\item
\emph{faithful} if $F$ maps $\mor(x,y)$ injectively to $\mor(Fx,Fy)$ for all objects $x,y$ in $\CC$;

\item
\emph{essentially surjective} if every object in $\DD$ is isomorphic to one in the image of $F$;

\item
an \emph{equivalence} if it has a \emph{quasi-inverse}, that is, a functor $G\colon \DD\to\CC$ such that the compositions $GF$ and $FG$ are naturally isomorphic to the identity functors.
\end{itemize}
We need the following result from \cite[Section~IV.4, Theorem~1]{maclane}:
$F\colon \CC\to\DD$ is an equivalence if and only if it is full, faithful, and essentially surjective.
Moreover, it follows from the proof of the theorem in \cite{maclane} that
if for every object $y$ of $\DD$ we choose an object $Gy$ of $\CC$ and an isomorphism
$\theta_y\colon FGy\to y$,
then $G$ extends uniquely to a quasi-inverse of $F$ whose morphism map
has the following universal property:
for every morphism $g\colon y\to z$ in $\DD$,
$Gg$ is the unique morphism in $\CC$ making the diagram
\[
\xymatrix{
Gy \ar@{-->}[d]^{!}_{Gg}
&FGy \ar[r]^-{\theta_y}_-\simeq \ar[d]_{FGg}
&y \ar[d]^g
\\
Gz
&FGz \ar[r]_-{\theta_z}^-\simeq
&z
}
\]
commute.
When we need to refer to this, we will just say ``by generalized abstract nonsense''.

A 
morphism $\phi$ is an isomorphism in the nondegenerate category if and only if it is a $C^*$-isomorphism in the usual sense,
and a
morphism $[X]$ is an isomorphism in the enchilada category if and only if $X$ is an imprimitivity bimodule.

Recall from \cite[Definition~4.1]{koqlandstad} that
we say
an \emph{inversion} of a functor $P\colon \CC\to \DD$
is a commutative diagram
\[
\xymatrix{
\CC \ar[r]^{\wilde P} \ar[dr]_P
&\wilde \DD \ar[d]^F
\\
&\DD
}
\]
of functors such that
\begin{itemize}
\item[(i)] $\wilde P$ is an equivalence of categories;
\item[(ii)] $\wilde \DD$ is a category whose objects are pairs $(A,\sigma)$,
where $A$ is an object of $\DD$ and $\sigma$ denotes some extra structure;
\item[(iii)] $F$ is defined by $F(A,\sigma)=A$ on objects, and is faithful.
\end{itemize}
Moreover, the inversion is \emph{good}
if both
\begin{enumerate}
\item
the image of $F$ is contained in the essential image of $P$,
and 

\item
$F$ has the following
\emph{unique isomorphism lifting property}:
whenever $y\in \DD$ and $u\in F\inv(y)$,
for every isomorphism $\theta$ in $\DD$ with domain $y$
there is a unique isomorphism $\theta_u$ in $\wilde \DD$ with domain $u$
such that $F(\theta_u)=\theta$.
\end{enumerate}

\section{$\KK$-algebras}\label{kalg}

We record here, for convenient reference,
a folklore result (\propref{AK}) that seems surprisingly difficult to dig out of the literature.
We emphasize that nothing in this section is new.
Let $\KK$ denote the compact operators on a separable infinite-dimensional Hilbert space.

\begin{defn}
A \emph{$\KK$-algebra} is a pair $(A,\iota)$, where $A$ is a $C^*$-algebra and $\iota\colon \KK\to M(A)$ is a nondegenerate homomorphism.
\end{defn}

Let $\{u_{ij}\}$ be a system of matrix units for $\KK$.
Then 
$e_{ij}=\iota(u_{ij})$
gives
a system 
of matrix units in $M(A)$, and
\[
\sum_ie_{ii}=1\righttext{strictly in $M(A)$}.
\]
Conversely, every such system of matrix units in $M(A)$ uniquely determines a nondegenerate homomorphism $\iota\colon \KK\to M(A)$.

\begin{defn}
In the above situation we call $\{e_{ij}\}_{i,j=1}^\infty$ a \emph{system of matrix units associated to $\iota$.}
\end{defn}

\begin{defn}
Let $(A,\iota)$ be a $\KK$-algebra.
The \emph{$A$-relative commutant} of $\KK$ associated to $\iota$ is
\[
C(A,\iota):=\{a\in M(A):\iota(k)a=a\iota(k)\in A\text{ for all }k\in\KK\}.
\]
\end{defn}

\begin{prop}\label{AK}
If $(A,\iota)$ is a $\KK$-algebra,
then
the $A$-relative commutant $C(A,\iota)$
is a nondegenerate $C^*$-subalgebra of $M(A)$, and
there is an isomorphism
\[
\theta
\colon C(A,\iota)\otimes\KK\variso A
\]
given on elementary tensors by
\[
\theta(a\otimes k)=a\iota(k).
\]

Furthermore, if $B$ is a $C^*$-algebra,
and if $B\otimes\KK$ is
regarded as a $\KK$-algebra via the map
\[
1\otimes\id\colon k\mapsto 1_B\otimes k
\]
then
the map
\[
\id\otimes 1\colon b\mapsto b\otimes 1
\]
gives an isomorphism
\[
B\simeq C(B\otimes\KK,1\otimes\id).
\]
\end{prop}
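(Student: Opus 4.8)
The plan is to treat the two halves separately, with the first (structural) part doing the bulk of the work and the second (consistency) part following by a direct computation.

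For the first part, the strategy is to verify the claimed properties of $C(A,\iota)$ and then construct the isomorphism $\theta$ by hand using a fixed system of matrix units $\{e_{ij}\}$ associated to $\iota$, while checking at the end that $\theta$ does not actually depend on that choice. First I would observe that $C(A,\iota)$ is visibly a $*$-subalgebra of $M(A)$ that is norm-closed (the defining conditions are closed under limits), hence a $C^*$-subalgebra. For nondegeneracy, I would use that $1 = \sum_i e_{ii}$ strictly and write, for $a \in M(A)$, the ``compression'' $c = \sum_{i,j} e_{i1} a' e_{1j}$ for suitable $a'$, or more directly: given $a \in A$, note $e_{11} a e_{11} \in A$ and set $b = \sum_{i,j} e_{i1}(e_{11}ae_{11})e_{1j}$, which lies in $C(A,\iota)$; elements of this form, applied to $a$ ranging over $A$, have dense span times $A$, giving $\overline{C(A,\iota)\cdot A} = A$. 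To build $\theta$, define $\theta(c \otimes k) = c\,\iota(k)$ on elementary tensors; this is well-defined and contractive on the algebraic tensor product because, writing $k = \sum k_{ij} u_{ij}$, one has $c\,\iota(k) = \sum k_{ij} c\, e_{ij}$ and the $c\,e_{ij}$ behave like matrix entries. The key point is that $\theta$ is multiplicative: $(c\otimes u_{ij})(c'\otimes u_{k\ell}) = \delta_{jk}\, cc' \otimes u_{i\ell}$ maps to $\delta_{jk} cc' e_{i\ell}$, while $c\,e_{ij}\,c'\,e_{k\ell} = c\,c'\,e_{ij}e_{k\ell} = \delta_{jk} cc' e_{i\ell}$ using that $c'$ commutes with $e_{ij}\in\iota(\KK)$. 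Hence $\theta$ extends to a $*$-homomorphism on $C(A,\iota)\otimes\KK$ (nuclearity of $\KK$ makes the tensor norm unambiguous). Surjectivity: every $a \in A$ equals $\sum_{i,j} e_{i1}(e_{1i} a e_{j1}) e_{1j}$ (strict convergence), and $e_{1i} a e_{j1} \in e_{11} A e_{11}$, which one checks is isomorphic to $C(A,\iota)\cdot e_{11}$, so $a$ is in the range. Injectivity: since $\KK$ is simple and nuclear, any proper closed ideal of $C(A,\iota)\otimes\KK$ has the form $J \otimes \KK$ for a closed ideal $J \lhd C(A,\iota)$; if $\theta$ killed $c\otimes u_{11}$ with $c\neq 0$ then $c\,e_{11}=0$, but then $c = c\sum_i e_{ii}$ and each $c\,e_{ii} = e_{i1}(c e_{11})e_{1i}\cdot(\text{unit adjustments}) = 0$, forcing $c=0$. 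I expect this injectivity bookkeeping, together with the verification that the resulting map is independent of the choice of matrix units, to be the main obstacle — everything else is formal, but one must be careful that ``$\iota(k)a = a\iota(k) \in A$'' is used in exactly the right places.

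For the second part, I would specialize to $A = B\otimes\KK$ with $\iota = 1\otimes\id$, so that $e_{ij} = 1_B \otimes u_{ij}$. The claim is that $b \mapsto b\otimes 1$ maps $B$ onto $C(B\otimes\KK,\, 1\otimes\id)$. That $b\otimes 1 \in M(B\otimes\KK)$ commutes with $1_B\otimes k$ and multiplies $1_B\otimes k$ into $B\otimes\KK$ is immediate, so $\id\otimes 1$ lands in the relative commutant and is an injective $*$-homomorphism. For surjectivity, take $c \in C(B\otimes\KK,\,1\otimes\id)$; then $c(1_B\otimes u_{11}) \in B\otimes\KK$ and commutes appropriately, and a short argument with the matrix units $1_B\otimes u_{ij}$ shows $c(1_B\otimes u_{11}) = d \otimes u_{11}$ for a unique $d \in B$ (identify the ``$(1,1)$ corner'' of $B\otimes\KK$ with $B$), after which $c = \sum_i (1_B\otimes u_{i1})(d\otimes u_{11})(1_B\otimes u_{1i}) = d\otimes 1$. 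This gives the inverse of $\id\otimes 1$ and completes the proof.
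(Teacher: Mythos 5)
Your overall route is different from the paper's and is workable in outline: instead of realizing $A$ as an inductive limit of corners $A|_n\simeq A_{11}\otimes M_n$ and separately identifying $C(A,\iota)\simeq A_{11}$ (the paper's method), you define $\theta(c\otimes k)=c\,\iota(k)$ directly from the two commuting nondegenerate subalgebras $C(A,\iota)$ and $\iota(\KK)$ of $M(A)$ (with nuclearity of $\KK$ supplying the homomorphism on the tensor product), prove injectivity from the ideal structure of $C(A,\iota)\otimes\KK$, and prove surjectivity by cutting down with the projections $e_{ii}$. The injectivity argument is fine, and your treatment of the second statement, that $C(B\otimes\KK,1\otimes\id)=B\otimes 1_\KK$, is correct and essentially the same computation as in the paper.

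However, there are two concrete problems in the first part, both at the same pressure point. First, the element you offer for nondegeneracy, $b=\sum_{i,j}e_{i1}(e_{11}ae_{11})e_{1j}$, does not exist: writing $d=e_{11}ae_{11}$ (say $d\ge 0$), the partial sums equal $v_ndv_n^*$ with $v_n=\sum_{i\le n}e_{i1}$, and since $v_n^*v_n=ne_{11}$ one gets $\|v_ndv_n^*\|=n\|d\|$, so the double series diverges --- it corresponds to $d$ tensored with the unbounded ``all ones'' matrix. The correct device is the \emph{diagonal} sum $\sigma(d)=\sum_ie_{i1}de_{1i}$, which converges strictly because the range projections $e_{ii}$ are orthogonal and sum strictly to $1$; one then checks $\sigma(d)\in C(A,\iota)$ and $\sigma(d)e_{11}=d$. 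Second, and relatedly, your surjectivity step rests on the assertion that $e_{11}Ae_{11}$ ``is isomorphic to $C(A,\iota)\cdot e_{11}$,'' but what is actually needed is the equality $e_{11}Ae_{11}=C(A,\iota)e_{11}$, i.e.\ that every corner element arises as $ce_{11}$ with $c\in C(A,\iota)$; this is precisely the nontrivial content of the proposition (the paper proves it by showing that $c\mapsto ce_{11}$ and $\sigma$ are mutually inverse between $C(A,\iota)$ and $A_{11}$), and you dismiss it with ``one checks'' while your only attempt at manufacturing elements of $C(A,\iota)$ from corner elements is the divergent double sum, so the gap is real. Once $\sigma$ is in place, nondegeneracy of $C(A,\iota)$ follows at once (since $\sigma$ is a nondegenerate homomorphism into $M(A)$ with range inside $C(A,\iota)$), and your worry about independence of the choice of matrix units evaporates: the formula $\theta(c\otimes k)=c\,\iota(k)$ never mentions them.
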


\begin{proof}
Let $\{e_{ij}\}$ be a system of matrix units associated to $\iota$.
For each $i,j$ put
\[
A_{ij}=e_{ii}Ae_{jj}.
\]
Then
$A$ is the inductive limit of the increasing family of $C^*$-subalgebras
\[
A|_n:=
\sum_{i,j=1}^nA_{ij},
\]
and the subspaces $A_{ij}$ are linearly independent.
For each $i,j$ there is an isometric linear bijection $\tau_{ij}\colon A_{ij}\to A_{11}$ given by
\[
\tau_{ij}(a)=e_{1i}ae_{j1},
\]
with $\tau_{ij}\inv\colon A_{11}\to A_{ij}$ given by
$a\mapsto e_{i1}ae_{1j}$.
Then there is a $C^*$-isomorphism $\varphi_n\colon A|_n\to A_{11}\otimes M_n$ given by
\[
\varphi_n\left(\sum_{i,j=1}^na_{ij}\right)=\sum_{i,j=1}^n\bigl(\tau_{ij}(a_{ij})\otimes u_{ij}\bigr).
\]
For each $n$,
identify $M_n$ with the upper left $n\times n$ corner of $M_{n+1}$.
We have commuting diagrams
\[
\xymatrix@C+20pt{
A|_n \ar[r]^-{\varphi_n}_-\simeq \ar@{^(->}[d]
&A_{11}\otimes M_n \ar@{^(->}[d]
\\
A|_{n+1} \ar[r]_-{\varphi_{n+1}}^-\simeq
&A_{11}\otimes M_{n+1},
}
\]
and taking the inductive limit gives an isomorphism
\begin{equation*}
\varphi=\varinjlim \varphi_n\colon \varinjlim A|_n=A\variso \varinjlim (A_{11}\otimes M_n)=A_{11}\otimes \KK.
\end{equation*}

We now show that
$C(A,\iota)$ is a nondegenerate $C^*$-subalgebra of $M(A)$,
and that
there is an isomorphism $\psi\colon C(A,\iota)\to A_{11}$ given by
\[
\psi(a)=ae_{11}.
\]
It is clear that $C(A,\iota)$ is a $C^*$-subalgebra of $M(A)$, and
$\psi$ is a 
homomorphism
because $e_{11}$ commutes with $C(A,\iota)$.

Since the partial isometries $\{e_{i1}\}_{i=1}^\infty$ have pairwise orthogonal range projections
that sum strictly to 1 in $M(A)$,
we can define a nondegenerate 
homomorphism $\sigma\colon A_{11}\to M(A)$ by
\[
\sigma(d)=\sum_i \ad e_{i1}(d)=\sum_i \tau_{ii}\inv(d),
\]
where the sum converges strictly.
The following computations show that $\sigma(A_{11})\subset C(A,\iota)$:
\begin{align*}
e_{ij}\sigma(d)
&=\sum_ne_{ij}e_{n1}de_{1n}
\\&=e_{i1}de_{1j}\righttext{(which is in $A$)}
\\&=\sum_ne_{n1}de_{1n}e_{ij}
\\&=\sigma(d)e_{ij},
\end{align*}
that
$\sigma$ is a left inverse for $\psi$:
\begin{align*}
\sigma\circ\psi(a)
&=\sum_ie_{i1}ae_{11}e_{1i}
\\&=\sum_iae_{i1}e_{1i}
\\&=\sum_iae_{ii}
\\&=a,
\end{align*}
and is also a right inverse:
\[
\psi\circ\sigma(d)=\sum_ie_{i1}de_{1i}e_{11}=e_{11}de_{11}=d.
\]
We see that $C(A,\iota)$ is nondegenerate in $M(A)$ since $\varphi(A_{11})$ is.

We will now show that the isomorphism
\[
\theta:=
\varphi\inv\circ(\psi\otimes\id)\colon C(A,\iota)\otimes \KK\variso A
\]
is given on elementary tensors by
\[
\theta(a\otimes k)=a\iota(k),
\]
and it suffices to check on 
matrix units:
\begin{align*}
\theta(a\otimes u_{ij})
&=\varphi\inv(ae_{11}\otimes u_{ij})
\\&=\varphi_n\inv(ae_{11}\otimes u_{ij})
\righttext{for any $n\ge i,j$}
\\&=\tau_{ij}\inv(ae_{11})
\\&=e_{i1}ae_{11}e_{1j}
\\&=ae_{i1}e_{1j}
\\&=ae_{ij}
\\&=a\iota(u_{ij}).
\end{align*}

For the other part,
let $B$ be a $C^*$-algebra, and let $\{u_{ij}\}$ be a system of matrix units for $\KK$.
Then
\[
e_{ij}=1_B\otimes u_{ij}
\]
gives a nondegenerate system of matrix units in $M(B\otimes\KK)$, and
in the notation of the above construction we have
\begin{align*}
(B\otimes\KK)_{ij}&=B\otimes u_{ij}&&\righttext{(since $u_{ii}\KK u_{jj}=\C u_{ij}$)}\\
\tau_{ii}\inv(b\otimes u_{11})&=b\otimes u_{ii}\\
\sigma(b)&=b\otimes 1_{\KK}\\
C(B\otimes\KK,1\otimes\id)&=B\otimes 1_\KK\simeq B.
&&\qedhere
\end{align*}
\end{proof}

It follows from \propref{AK} that a $C^*$-algebra is stable if and only if its multiplier algebra contains a nondegenerate copy of $\KK$;
this characterization
is contained in \cite[Theorem~2.1 (c)]{HjeRorStability},
although the version we have stated is folklore that surely has been around much longer.

\begin{rem}
If $(A,\iota)$ is a $\KK$-algebra, then
\[
\|a\iota(k)\|=\|a\|\|k\|\righttext{for all}a\in C(A,\iota),k\in\KK.
\]
Indeed,
this follows immediately from the isomorphism $a\otimes k\mapsto a\iota(k)$ and the equality $\|a\otimes k\|=\|a\|\|k\|$.
\end{rem}

For later convenience we identify the multipliers of the relative commutant of $\KK$:

\begin{lem}\label{multiplier}
If $(A,\iota)$ is a $\KK$-algebra, then
\[
M(C(A,\iota))=\{a\in M(A):\iota(k)a=a\iota(k)\text{ for all }k\in\KK\}.
\]
In particular, if $B$ is a $C^*$-algebra then
\[
M\bigl(C(B\otimes\KK,1\otimes\id)\bigr)=M(B)\otimes 1_{\KK}\subset M(B\otimes\KK).
\]
\end{lem}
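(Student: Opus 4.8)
The plan is to identify $M(C(A,\iota))$ with the stated relative commutant inside $M(A)$ by transporting the structure through the isomorphism $\theta\colon C(A,\iota)\otimes\KK\variso A$ of \propref{AK}. Write $C:=C(A,\iota)$ and let $\psi\colon C\to A_{11}$ be the isomorphism $\psi(a)=ae_{11}$ and $\sigma=\psi\inv\colon A_{11}\to M(A)$ the nondegenerate embedding constructed in the proof of \propref{AK}, so that $\theta=\varphi\inv\circ(\psi\otimes\id)$. Since $\theta$ is an isomorphism of $C^*$-algebras, it extends to an isomorphism $M(C\otimes\KK)\variso M(A)$, and applying the standard identification $M(C\otimes\KK)=\LL_C(C\otimes\KK)\supset M(C)\otimes 1_\KK$ (more precisely, $M(C)\otimes 1_\KK$ is exactly the set of multipliers of $C\otimes\KK$ commuting with $1_C\otimes\KK$), the first step is to show that $\bar\theta$ carries $M(C)\otimes 1_\KK$ onto $\{a\in M(A):\iota(k)a=a\iota(k)\ \forall k\in\KK\}$.

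First I would make precise the formula for $\bar\theta$ on $M(C)\otimes 1_\KK$. For $m\in M(C)$ one checks $\bar\theta(m\otimes 1_\KK)=\bar\sigma(\psi(m\cdot{-}))$-type bookkeeping; concretely, since $\theta(a\otimes u_{ij})=a\iota(u_{ij})=ae_{ij}$ and $\sigma(d)=\sum_i e_{i1}de_{1i}$ (strict sum), one gets $\bar\theta(m\otimes 1_\KK)=\bar\sigma(\psi(m))\cdot$ — more usefully, $\bar\theta(m\otimes 1_\KK)$ acts on $A$ via $\theta$-conjugation and equals the unique $b\in M(A)$ with $b\cdot\theta(a\otimes k)=\theta(ma\otimes k)$ for all $a\in C,k\in\KK$. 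From $\theta(a\otimes u_{ij})=ae_{ij}$ this forces $b\,e_{ij}c=\theta(m\,(\text{something})\,\otimes\,\cdot)$, and after the routine computation one finds that $b$ commutes with every $e_{ij}=\iota(u_{ij})$, hence with all of $\iota(\KK)$. Conversely, if $b\in M(A)$ commutes with $\iota(\KK)$, then $\theta\inv$ conjugation shows $\theta\inv(b)$ commutes with $1_C\otimes u_{ij}$ for all $i,j$, hence with $1_C\otimes\KK$, so $\theta\inv(b)\in M(C)\otimes 1_\KK$. This is the heart of the argument; the main obstacle is being careful about \emph{strict} convergence of the relevant sums and about the precise identification of which multipliers of $C\otimes\KK$ make up $M(C)\otimes 1_\KK$ — this is a standard but slightly delicate fact (it is the statement that $M(C)\otimes 1_\KK=\{T\in M(C\otimes\KK): T(1_C\otimes k)=(1_C\otimes k)T\ \forall k\}$, which one proves using an approximate identity $\sum_{i\le n}1_C\otimes u_{ii}$ and slice maps).

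It remains to observe that $\{a\in M(A):\iota(k)a=a\iota(k)\in A\ \forall k\}=C(A,\iota)$ sits inside $\{a\in M(A):\iota(k)a=a\iota(k)\ \forall k\}=M(C(A,\iota))$ as an ideal in the expected way, which is automatic once the isomorphism with $M(C)\otimes 1_\KK\supset C\otimes 1_\KK$ is in hand, since $C\otimes 1_\KK$ is (via $\theta$) exactly $C(A,\iota)$ by the last display in the proof of \propref{AK}. For the ``in particular'' statement, apply the general case with $A=B\otimes\KK$ and $\iota=1\otimes\id$: then $\{a\in M(B\otimes\KK):(1_B\otimes k)a=a(1_B\otimes k)\ \forall k\in\KK\}=M(B)\otimes 1_\KK$ is precisely the relative-commutant computation above with $M(B)$ in place of $C$, which also re-proves the folklore fact $M(B\otimes\KK)$-commutant $=M(B)\otimes 1_\KK$ directly. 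I expect no serious difficulty beyond the multiplier-of-tensor-product bookkeeping flagged above.
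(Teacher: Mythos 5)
Your argument is correct in outline, but it takes a genuinely different route from the paper. The paper proves both inclusions directly inside $M(A)$: for $a\in M(C(A,\iota))$ it shows $a$ commutes with every matrix unit $e_{ij}$ by testing against the strictly nondegenerate copy $\sigma(A_{11})$, and conversely it shows that any $a\in M(A)$ commuting with all $e_{ij}$ idealizes $C(A,\iota)$, which suffices because $C(A,\iota)$ is nondegenerate in $M(A)$; no tensor-product multiplier theory is invoked. You instead transport the problem through the isomorphism $\theta\colon C(A,\iota)\otimes\KK\variso A$ of \propref{AK}, extend to multiplier algebras, note $\bar\theta(1\otimes k)=\iota(k)$, and reduce everything to the standard fact that $M(C)\otimes 1_\KK$ is exactly the relative commutant of $1_C\otimes\KK$ in $M(C\otimes\KK)$. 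That reduction is clean and conceptually appealing (it makes the general case a one-line conjugation argument, and it simultaneously handles the identification of $M(C(A,\iota))$ inside $M(A)$ via your uniqueness characterization of $\bar\theta(m\otimes 1)$), but be aware of two costs. First, the ``standard fact'' you quote is essentially the ``in particular'' clause of the lemma itself, so you must prove it independently --- your suggested proof via the approximate identity $\sum_{i\le n}1_C\otimes u_{ii}$ and slice maps is fine, but it is in substance the same matrix-unit computation the paper performs directly, so your route does not actually save work, it just relocates it; if you were to cite the lemma's special case for that fact the argument would be circular. Second, the bookkeeping step identifying $\bar\theta(m\otimes 1_\KK)$ with the canonical image of $m\in M(C(A,\iota))$ in $M(A)$ should be written out cleanly (check both maps agree on products $a\iota(k)$, which densely span $A$); as written it is a bit garbled, though the characterization you state (the unique $b\in M(A)$ with $b\,\theta(a\otimes k)=\theta(ma\otimes k)$ for all $a,k$) is the right one. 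With those points tidied up, your proof is complete; the paper's version has the advantage of being self-contained and elementary, in keeping with its stated aims.
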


\begin{proof}
First let $a\in M(C(A,\iota))$.
We need to show that 
$a$ commutes with every $e_{ij}$.
For all $d\in A_{11}$ we have
\begin{align*}
e_{ij}a\sigma(d)
&=a\sigma(d)e_{ij}\righttext{(because $a\sigma(d)\in C(A,\iota)$)}
\\&=a\sum_n e_{n1}de_{1n}e_{ij}
\\&=ae_{i1}de_{1j}
\\&=a\sum_ne_{ij}e_{n1}de_{1n}
\\&=ae_{ij}\sigma(d),
\end{align*}
which suffices because $\sigma(A_{11})$ is nondegenerate in $M(A)$.

On the other hand, suppose $a$ commutes with every $e_{ij}$.
Then for all $d\in A_{11}$,
\begin{align*}
\sigma(d)a
&=\sum_ie_{i1}de_{1i}a
\\&=\sum_ie_{i1}dae_{1i}
\\&=\sigma(da)\righttext{(because $da\in A_{11}$)},
\end{align*}
and hence $C(A,\iota) a\subset C(A,\iota)$.
Similarly, $aC(A,\iota)\subset C(A,\iota)$.
Thus $a$ idealizes $C(A,\iota)$,
and so $a\in M(C(A,\iota))$ since $C(A,\iota)$ is nondegenerate in $M(A)$.
\end{proof}

\section{Nondegenerate category equivalence}\label{equivalence}

In this section we parlay $A\mapsto C(A,\iota)$ into a quasi-inverse for the stabilization functor $B\mapsto B\otimes\KK$.

\begin{defn}
In the \emph{nondegenerate category $\kalgn$ of $\KK$-algebras} $(A,\iota)$,
a morphism $\phi\colon (A,\iota)\to (B,\zeta)$
is a nondegenerate homomorphism $\phi\colon A\to M(B)$
such that
$\phi\circ\iota=\zeta$.
\end{defn}

\begin{rem}
In categorical terminology, $\kalgn$ is precisely the
coslice category 
of objects in $\csn$ under $\KK$,
often denoted
$\KK\dn\csn$
or $\KK/\csn$,
and is a special type of
comma category.
\end{rem}

\begin{defn}
The 
\emph{nondegenerate stabilization functor} $\stn\colon \csn\to \csn$
is given on objects by $B\mapsto B\otimes\KK$
and on morphisms by
$\phi\mapsto \phi\otimes\id_\KK$,
and the
\emph{embellished nondegenerate stabilization functor} $\wstn\colon \csn\to \kalgn$
is given on objects by $B\mapsto (B\otimes\KK,1\otimes\id_\KK)$
and on morphisms by
$\phi\mapsto \phi\otimes\id_\KK$.
\end{defn}
The above is justified by the well-known facts that
if $\phi\colon A\to M(B)$ is a nondegenerate homomorphism
then 
\[
\phi\otimes\id_\KK\colon A\otimes\KK\to M(B\otimes\KK)
\]
is a nondegenerate homomorphism such that
\[
(\phi\otimes\id_\KK)(1_A\otimes k)=1_B\otimes k\righttext{for all}k\in\KK,
\]
and 
that $\phi\mapsto \phi\otimes\id_\KK$ is functorial.

\begin{thm}\label{destabilize}
The embellished nondegenerate stabilization functor $\wstn\colon \csn\to\kalgn$ is a category equivalence,
and has a 
unique 
quasi-inverse functor 
whose object map is
$(A,\iota)\mapsto C(A,\iota)$.

Moreover, this quasi-inverse takes a morphism $\psi\colon (A,\iota)\to (B,\zeta)$ to
the ``restriction'' $\psi|_{C(A,\iota)}\colon C(A,\iota)\to C(B,\zeta)$.
\end{thm}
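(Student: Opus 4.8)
The plan is to verify that $\wstn$ is faithful, essentially surjective, and full --- so that it is an equivalence by the criterion recalled in \secref{prelim} --- and then to read off the quasi-inverse by ``generalized abstract nonsense''. Most of the analytic content is already contained in \propref{AK} and \lemref{multiplier}; what remains is largely categorical bookkeeping, the one genuinely substantive point being fullness. Faithfulness is immediate: if $\phi,\phi'\colon A\to M(B)$ are nondegenerate homomorphisms with $\phi\otimes\id_\KK=\phi'\otimes\id_\KK$, then $\phi(a)\otimes u_{11}=\phi'(a)\otimes u_{11}$ in $M(B\otimes\KK)$ for every $a$; but $m\mapsto m\otimes u_{11}$ is injective on $M(B)$ (if $m\otimes u_{11}=0$ then $mb\otimes u_{11}=0$ for all $b\in B$, so $m=0$ by nondegeneracy of $B$ in $M(B)$), hence $\phi=\phi'$. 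Essential surjectivity is \propref{AK}: given a $\KK$-algebra $(A,\iota)$, the isomorphism $\theta\colon C(A,\iota)\otimes\KK\variso A$ of \propref{AK} is in fact an isomorphism $\wstn(C(A,\iota))=(C(A,\iota)\otimes\KK,1\otimes\id)\to(A,\iota)$ in $\kalgn$, because its extension to multipliers carries $1\otimes k$ to $\iota(k)$ --- indeed $\theta(e_\lambda\otimes k)=e_\lambda\iota(k)\to\iota(k)$ strictly for an approximate identity $(e_\lambda)$ of $C(A,\iota)$.

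Fullness is the step I expect to require the most care. Let $\psi\colon\wstn A\to\wstn B$ be a morphism in $\kalgn$, \ie, a nondegenerate homomorphism $\psi\colon A\otimes\KK\to M(B\otimes\KK)$ with $\bar\psi(1_A\otimes k)=1_B\otimes k$ for all $k\in\KK$; I must produce $\phi\colon A\to M(B)$ with $\phi\otimes\id_\KK=\psi$. Since $a\otimes1$ commutes with every $1_A\otimes k$ in $M(A\otimes\KK)$, its image $\bar\psi(a\otimes1)$ commutes with every $1_B\otimes k$, so by \lemref{multiplier} it lies in $M(B)\otimes1_\KK$; write $\bar\psi(a\otimes1)=\phi(a)\otimes1$, which defines a homomorphism $\phi\colon A\to M(B)$. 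From $a\otimes k=(a\otimes1)(1_A\otimes k)$ we get $\psi(a\otimes k)=\bar\psi(a\otimes1)\,\bar\psi(1_A\otimes k)=(\phi(a)\otimes1)(1_B\otimes k)=\phi(a)\otimes k$, so $\psi=\phi\otimes\id_\KK$. Moreover $\phi$ is nondegenerate: compressing the identity $\clspn\bigl(\psi(A\otimes\KK)(B\otimes\KK)\bigr)=B\otimes\KK$ by a fixed rank-one projection $p\in\KK$ gives $\clspn\bigl(\phi(A)B\bigr)\otimes p=B\otimes p$, whence $\clspn\bigl(\phi(A)B\bigr)=B$. Therefore $\psi=\wstn\phi$ and $\wstn$ is full; combined with the above, $\wstn$ is an equivalence.

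Finally, to identify the quasi-inverse I would invoke generalized abstract nonsense with $G(A,\iota):=C(A,\iota)$ and the isomorphisms $\theta_{(A,\iota)}$ of \propref{AK} as the chosen data: this yields the unique quasi-inverse $G$ of $\wstn$ with object map $(A,\iota)\mapsto C(A,\iota)$, and characterizes $G\psi$, for a morphism $\psi\colon(A,\iota)\to(B,\zeta)$, as the unique morphism of $\csn$ satisfying $\psi\circ\theta_{(A,\iota)}=\theta_{(B,\zeta)}\circ(G\psi\otimes\id_\KK)$. It then suffices to check that the restriction $\bar\psi|_{C(A,\iota)}$ does the job. It is a morphism $C(A,\iota)\to M(C(B,\zeta))$ of $\csn$: for $a\in C(A,\iota)$ one has $\zeta(k)\bar\psi(a)=\bar\psi(\iota(k)a)=\bar\psi(a\iota(k))=\bar\psi(a)\zeta(k)$ (using $\iota(k)a=a\iota(k)$ and $\bar\psi\circ\iota=\zeta$), so $\bar\psi(a)\in M(C(B,\zeta))$ by \lemref{multiplier}, and nondegeneracy follows by transporting, via the isomorphisms of \propref{AK}, to the stabilized situation handled in the previous paragraph. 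And evaluating each side of $\psi\circ\theta_{(A,\iota)}=\theta_{(B,\zeta)}\circ(\bar\psi|_{C(A,\iota)}\otimes\id_\KK)$ on an elementary tensor $a\otimes k$ with $a\in C(A,\iota)$ gives $\psi(a\iota(k))=\bar\psi(a)\zeta(k)$ on the left and $\theta_{(B,\zeta)}(\bar\psi(a)\otimes k)=\bar\psi(a)\zeta(k)$ on the right, so the square commutes. Hence $G\psi=\bar\psi|_{C(A,\iota)}$, completing the proof. The place where I would be most careful in a full write-up is the nondegeneracy of $\bar\psi|_{C(A,\iota)}$ as a map into $M(C(B,\zeta))$ rather than merely into $M(B)$, which is precisely what makes the detour through \propref{AK} necessary.
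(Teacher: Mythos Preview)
Your proof is correct and follows essentially the same approach as the paper: verify that $\wstn$ is full, faithful, and essentially surjective (the latter via \propref{AK}), then invoke generalized abstract nonsense for the quasi-inverse. The only notable differences are cosmetic --- you separate faithfulness from fullness and use a compression-by-projection argument for nondegeneracy where the paper uses an approximate-identity norm computation --- and you actually supply more detail on identifying $G\psi$ with $\bar\psi|_{C(A,\iota)}$ than the paper does (it simply cites Mac Lane's techniques).
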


\begin{proof}
For the first statement it suffices to show that 
$\wstn$
is full, faithful, and essentially surjective.
To see that it
is full and faithful,
for $C^*$-algebras $A,B$
we must show that
the map
$\phi\mapsto \phi\otimes\id_\KK$ from
$\mor(A,B)$ to $\mor\bigl((A\otimes\KK,1_A\otimes\id_\KK),(B\otimes\KK,1_B\otimes\id_\KK)\bigr)$
is bijective. This is well-known,
although we include an argument for completeness:
let $\psi\colon A\otimes\KK\to M(B\otimes\KK)$ be a nondegenerate homomorphism such that
\[
\psi(1_A\otimes k)=1_B\otimes k\righttext{for all}k\in\KK.
\]
We must show that there exists a unique nondegenerate homomorphism $\phi\colon A\to M(B)$ such that $\psi=\phi\otimes\id_\KK$.
To get the homomorphism $\phi$,
it suffices to show that (the extension to $M(A\otimes\KK)$ of) $\psi$ maps $A\otimes 1_\KK$ into
$M(B\otimes 1_\KK)=M(B)\otimes 1_\KK$:
if $a\in A$ then for all $k\in\KK$ we have
\begin{align*}
(1_B\otimes k)\psi(a\otimes 1)
&=\psi\bigl((1_A\otimes k)(a\otimes 1)\bigr)
\\&=\psi\bigl((a\otimes 1)(1_A\otimes k)\bigr)
\\&=\psi(a\otimes 1)(1_B\otimes k),
\end{align*}
so $\psi(a\otimes 1)\in M(B\otimes 1)$.
Thus there is a unique homomorphism $\phi\colon A\to M(B)$ such that
\[
\phi(a)\otimes 1_\KK=\psi(a\otimes 1_\KK)\righttext{for}a\in A.
\]
To see that 
$\phi$ is nondegenerate,
let $\{a_i\}$ be an approximate identity for $A$.
Since $A\otimes 1$ is nondegenerate in $M(A\otimes\KK)$ and $\psi\colon A\otimes\KK\to M(B\otimes\KK)$ is nondegenerate,
$\psi(a_i\otimes 1)\to 1$ strictly in $M(B\otimes\KK)$.
Let $b\in B$, and take $k\in\KK$ with $\|k\|=1$.
Then
\begin{align*}
\|\phi(a_i)b-b\|
&=\|\phi(a_i)b-b\|\|k\|
\\&=\bigl\|\bigl(\phi(a_i)b-b\bigr)\otimes k\bigr\|
\\&=\|\phi(a_i)b\otimes k-b\otimes k\|
\\&=\|\psi(a_i\otimes 1)(b\otimes k)-b\otimes k\|
\\&\to 0.
\end{align*}
A routine computation now shows that $\psi=\phi\otimes\id_\KK$.

To see that stabilization is essentially surjective,
let $(A,\iota)$ be a $\KK$-algebra.
\propref{AK} gives an isomorphism
\[
\theta\colon C(A,\iota)\otimes\KK\variso A
\]
such that $\theta(a\otimes k)=a\iota(k)$,
and it follows that
\[
\theta\circ (1\otimes\id_\KK)=\iota.
\]
Thus
\[
(C(A,\iota)\otimes\KK,1_A\otimes\id_\KK)\simeq (A,\iota)\righttext{in}\kalgn.
\]
It now follows from generalized abstract nonsense that the assignment $(A,\iota)\mapsto C(A,\iota)$ extends uniquely to a quasi-inverse for the stabilization functor.

Finally, the statement regarding morphisms now follows from the above and the techniques in the proof of
\cite[Section~IV.4, Theorem~1]{maclane}.
\end{proof}

In the above theorem and proof, we did not follow the strategy of nominating a functor as a quasi-inverse and then 
verifying
that the compositions of the two functors in either direction coincide with the appropriate identity functors,
because 
it is more efficient to apply \cite[Section~IV.4, Theorem~1 and its proof]{maclane}. The main benefit of our approach is that
functoriality of the quasi-inverse 
follows automatically from the rest, by Mac Lane's theorem and its proof.

\begin{defn}
The \emph{nondegenerate destabilization functor} $\dstn\colon \kalgn\to\csn$ is
the quasi-inverse of $\wstn$ with object map
$(A,\iota)\mapsto C(A,\iota)$,
described in the above theorem,
and for a morphism $\psi\colon (A,\iota)\to (B,\zeta)$ in $\kalgn$ we write $C(\psi)\colon C(A,\iota)\to C(B,\zeta)$ for the destabilization.
\end{defn}

\thmref{destabilize} shows that 
destabilization gives rise to
an inversion 
of the stabilization functor $B\mapsto B\otimes \KK$,
with forgetful functor $(A,\iota)\mapsto A$.
It is easy to check that this inversion is good:
the objects in the image of the forgetful functor are precisely those $C^*$-algebras that are isomorphic in $\csn$ to objects in the image of $\stn$,
and
if $(A,\iota)$ is an object in $\kalgn$ and $\theta\colon A\variso B$ is an isomorphism in $\csn$,
then $\zeta:=\theta\circ\iota\colon \KK\to M(B)$ gives
a $\KK$-algebra $(B,\zeta)$
such that $\theta\colon (A,\iota)\to (B,\zeta)$ is the unique isomorphism in $\kalgn$
with domain $(A,\iota)$ whose image under the forgetful functor is the given isomorphism $\theta\colon A\to B$.

Since we have a good inversion, \cite[Proposition~4.2]{koqlandstad} gives the following immediate consequence:

\begin{cor}
For $j=1,2$ let $\iota_j\colon \KK\to M(A)$ be a nondegenerate homomorphism,
and suppose that $B_j$ is a $C^*$-algebra isomorphic to the relative commutant $C(A,\iota_j)$.
Then $B_1\simeq B_2$ if and only if there is an automorphism $\alpha$ of $A$ such that
\[
\alpha\circ\iota_1=\iota_2.
\]
\end{cor}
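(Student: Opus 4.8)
The plan is to read this off from the good inversion established just above, which is exactly the situation covered by \cite[Proposition~4.2]{koqlandstad}; but since the argument is short I would also spell it out directly in the present setting. Write $F\colon\kalgn\to\csn$ for the forgetful functor $(A,\iota)\mapsto A$, and recall from \thmref{destabilize} that $\dstn$ is a quasi-inverse of $\wstn$ with object map $(A,\iota)\mapsto C(A,\iota)$, so in particular $\dstn(A,\iota_j)=C(A,\iota_j)\simeq B_j$ for $j=1,2$. The one thing I would keep careful track of throughout is the distinction between isomorphisms in $\kalgn$ and in $\csn$: an isomorphism $(A,\iota_1)\to(A,\iota_2)$ in $\kalgn$ is precisely an automorphism of $A$ intertwining the two copies of $\KK$, since the forgetful functor carries it to a $C^*$-isomorphism $A\to A$ and the morphism condition in $\kalgn$ says exactly $\alpha\circ\iota_1=\iota_2$.

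For the ``if'' direction I would take an automorphism $\alpha$ of $A$ with $\alpha\circ\iota_1=\iota_2$; this is an isomorphism $(A,\iota_1)\to(A,\iota_2)$ in $\kalgn$, and applying $\dstn$ (an equivalence, hence sending isomorphisms to isomorphisms) yields a $C^*$-isomorphism $C(\alpha)\colon C(A,\iota_1)\to C(A,\iota_2)$. Hence $B_1\simeq C(A,\iota_1)\simeq C(A,\iota_2)\simeq B_2$.

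For the ``only if'' direction I would start from a $C^*$-isomorphism $\beta\colon C(A,\iota_1)\to C(A,\iota_2)$ and apply $\wstn$, obtaining an isomorphism
\[
\wstn(\beta)\colon (C(A,\iota_1)\otimes\KK,1\otimes\id_\KK)\to(C(A,\iota_2)\otimes\KK,1\otimes\id_\KK)
\]
in $\kalgn$. By \propref{AK} together with the essential-surjectivity step of \thmref{destabilize}, for each $j$ there is an isomorphism $\theta_j\colon(C(A,\iota_j)\otimes\KK,1\otimes\id_\KK)\to(A,\iota_j)$ in $\kalgn$ (namely $\theta_j(a\otimes k)=a\iota_j(k)$, which satisfies $\theta_j\circ(1\otimes\id_\KK)=\iota_j$). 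Then $\alpha:=\theta_2\circ\wstn(\beta)\circ\theta_1\inv$ is an isomorphism $(A,\iota_1)\to(A,\iota_2)$ in $\kalgn$, so by the observation above its underlying map is an automorphism $\alpha$ of $A$ with $\alpha\circ\iota_1=\iota_2$.

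I do not expect any genuine obstacle: the content is entirely formal once \thmref{destabilize} and \propref{AK} are in hand, and the only place to be slightly attentive is the bookkeeping of the distinguished copies of $\KK$ under $\wstn$ and under the isomorphisms $\theta_j$. As an alternative I would simply invoke \cite[Proposition~4.2]{koqlandstad} applied to the good inversion exhibited immediately before the corollary.
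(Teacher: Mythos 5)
Your proposal is correct and matches the paper, which simply invokes the good inversion together with \cite[Proposition~4.2]{koqlandstad}; your spelled-out argument (transporting an isomorphism of the relative commutants through $\wstn$ and the isomorphisms $\theta_j$ of \propref{AK}, and conversely applying $\dstn$ to an automorphism intertwining $\iota_1$ and $\iota_2$) is exactly the unwinding of that citation in this setting. No gaps.
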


\section{Extending to Hilbert bimodules}\label{bimodule}

\begin{prop}\label{XK}
Let $(A,\iota)$ and $(B,\zeta)$ be $\KK$-algebras
let $X$ be an $A-B$ Hilbert bimodule.
Let $C(A,\iota)$ and $C(B,\zeta)$ be the associated relative commutants of $\KK$,
and put
\begin{equation}
C(X,\iota,\zeta)=\{x\in M(X):\iota(k)\cdot x=x\cdot \zeta(k)\in X\text{ for all }k\in\KK\}.
\end{equation}
Then:
\begin{enumerate}
\item
$C(X,\iota,\zeta)$ becomes an $C(A,\iota)-C(B,\zeta)$ Hilbert bimodule with operations inherited from the $M(A)-M(B)$ Hilbert bimodule $M(X)$,
and moreover has linking algebra
\[
C(L,\omega)=L(C(X,\iota,\zeta)),
\]
where $\omega=\smtx{\iota&0\\0&\zeta}$.

\item
When $X$ is regarded as a $(C(A,\iota)\otimes\KK)-(C(B,\zeta)\otimes\KK)$ Hilbert bimodule via the isomorphisms
$A\simeq C(A,\iota)\otimes\KK$
and
$B\simeq C(B,\zeta)\otimes\KK$
of \propref{AK},
there is an isomorphism
\[
C(X,\iota,\zeta)\otimes\KK\variso X
\]
of $(C(A,\iota)\otimes\KK)-(C(B,\zeta)\otimes\KK)$ Hilbert bimodules
given on elementary tensors by
\[
x\otimes k\mapsto x\cdot \zeta(k).
\]

\item
The $C(A,\iota)-C(B,\zeta)$ Hilbert bimodule $C(X,\iota,\zeta)$ is right-full \(respectively, left-full\) if and only if the $A-B$ Hilbert bimodule $X$ is; in particular, $C(X,\iota,\zeta)$ is an imprimitivity bimodule if and only if $X$ is.
\end{enumerate}

Furthermore, if $C$ and $D$ are $C^*$-algebras and $Y$ is a $C-D$ Hilbert bimodule, then
\[
C(Y\otimes\KK,1_C\otimes\id,1_D\otimes\id)\simeq Y
\]
as $C-D$ Hilbert bimodules.
\end{prop}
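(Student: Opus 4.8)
The plan is to treat Proposition~\ref{XK} as the Hilbert-bimodule analogue of \propref{AK}, and to deduce each piece by running the ``linking algebra'' translation: an $A-B$ Hilbert bimodule $X$ is the same data as its linking algebra $L(X)=\smtx{A&X\\\wilde X&B}$ together with its grading corners, and a nondegenerate system of matrix units for $\KK$ inside $M(A)$ and $M(B)$ assembles into one inside $M(L(X))=\smtx{M(A)&M(X)\\ M(\wilde X)&M(B)}$ via $\omega=\smtx{\iota&0\\0&\zeta}$. Thus $(L(X),\omega)$ is a $\KK$-algebra, \propref{AK} applies to it, and one reads off the $X$-component of everything. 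First I would observe that the relative commutant $C(L(X),\omega)$, which by \propref{AK} is a nondegenerate $C^*$-subalgebra of $M(L(X))$ isomorphic to the $(1,1)$-corner, respects the $2\times 2$ grading because the diagonal matrix units $\smtx{e_{ii}&0\\0&0}$ and $\smtx{0&0\\0&f_{ii}}$ (where $\{e_{ij}\}$, $\{f_{ij}\}$ are the systems associated to $\iota$, $\zeta$) lie in its multiplier algebra and their ``cutdowns'' reproduce the four corners; the off-diagonal $(1,2)$-corner of $C(L(X),\omega)$ is then exactly $C(X,\iota,\zeta)$ as defined in the statement, and the $(1,1)$- and $(2,2)$-corners are $C(A,\iota)$ and $C(B,\zeta)$. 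This gives part~(1): $C(X,\iota,\zeta)$ is a $C(A,\iota)-C(B,\zeta)$ Hilbert bimodule with the inherited operations, and its linking algebra is $C(L(X),\omega)$ essentially by construction, since the corners of a linking algebra recover the bimodule.

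For part~(2) I would apply the isomorphism $\theta\colon C(L(X),\omega)\otimes\KK\variso L(X)$ of \propref{AK}, which sends $\ell\otimes u_{ij}\mapsto \ell\,\omega(u_{ij})$, and check it is compatible with the $2\times 2$ grading: on the $(1,2)$-corner this restricts to a map $C(X,\iota,\zeta)\otimes\KK\to X$ sending $x\otimes u_{ij}\mapsto x\cdot\zeta(u_{ij})$, which is the claimed formula (one passes from matrix units to general $k\in\KK$ by linearity and continuity, exactly as in the proof of \propref{AK}). That $\theta$ is grading-preserving follows because $\theta$ intertwines the diagonal matrix units on both sides. Since $\theta$ is an isomorphism of $C^*$-algebras respecting the grading, its corner restriction is automatically an isomorphism of Hilbert bimodules over the corner algebras, which are $C(A,\iota)\otimes\KK\simeq A$ and $C(B,\zeta)\otimes\KK\simeq B$; this is precisely the identification of $X$ as a $(C(A,\iota)\otimes\KK)-(C(B,\zeta)\otimes\KK)$ bimodule named in the statement. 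Part~(3) is then immediate: fullness on the right, $\clspn\<X,X\>_B=B$, corresponds under the isomorphism of part~(2) and \lemref{multiplier} to $\clspn\<C(X,\iota,\zeta),C(X,\iota,\zeta)\>_{C(B,\zeta)}=C(B,\zeta)$, because tensoring with $\KK$ preserves and reflects the span condition; symmetrically for left-fullness, and hence the imprimitivity-bimodule statement. For the final assertion, given a $C-D$ Hilbert bimodule $Y$, its linking algebra is $L(Y\otimes\KK)=L(Y)\otimes\KK$ with matrix units $1_{L(Y)}\otimes u_{ij}=\omega(u_{ij})$ for $\omega=\smtx{1_C\otimes\id&0\\0&1_D\otimes\id}$, so the ``furthermore'' clause of \propref{AK} applied to $L(Y)$ gives $C(L(Y)\otimes\KK,\omega)=L(Y)\otimes 1_\KK\simeq L(Y)$, and reading off the $(1,2)$-corner yields $C(Y\otimes\KK,1_C\otimes\id,1_D\otimes\id)\simeq Y$ as $C-D$ Hilbert bimodules.

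The main obstacle I anticipate is \textbf{not} any single hard estimate but the bookkeeping needed to justify that every object in sight is genuinely ``graded'': that $C(L(X),\omega)$, a priori just a subalgebra of $M(L(X))$, actually decomposes as $\smtx{C(A,\iota)&C(X,\iota,\zeta)\\ \overline{C(X,\iota,\zeta)}&C(B,\zeta)}$, and that this decomposition is the linking-algebra grading of a bona fide Hilbert bimodule (one must verify the compatibility condition ${}_{C(A,\iota)}\<x,y\>\cdot z=x\cdot\<y,z\>_{C(B,\zeta)}$, but this is inherited verbatim from $M(X)$). Concretely one needs the diagonal grading projections $p=\smtx{1&0\\0&0}$, $q=\smtx{0&0\\0&1}$ of $M(L(X))$ to multiply $C(L(X),\omega)$ into itself so that $p\,C(L(X),\omega)\,q$ makes sense and lands in $M(X)$ with the commutation property defining $C(X,\iota,\zeta)$; since $p=\sum_i e_{ii}$ and $q=\sum_i f_{ii}$ are themselves strict limits of the given matrix units, and the matrix units are in $M(C(L(X),\omega))$ by \lemref{multiplier}, this goes through. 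Once the graded picture is set up, parts~(1)--(3) and the final statement are all just ``restrict \propref{AK} to a corner'', so the work is entirely in the first paragraph's identifications and in checking that the relevant formulas on matrix units extend by continuity.
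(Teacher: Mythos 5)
Your proposal is correct and follows essentially the same route as the paper: apply \propref{AK} to the linking algebra $L(X)$ with $\omega=\smtx{\iota&0\\0&\zeta}$, verify that the grading projections lie in $M(C(L,\omega))$, identify the corners of $C(L,\omega)$ as $C(A,\iota)$, $C(X,\iota,\zeta)$, $C(B,\zeta)$, and restrict the isomorphism $\theta\colon C(L,\omega)\otimes\KK\variso L$ to the upper right corner, then read off the ``furthermore'' clause from the linking algebra of $Y\otimes\KK$. The only minor divergence is part (3), where you argue that tensoring with $\KK$ preserves and reflects fullness of the inner products, whereas the paper transports fullness of the projection $p$ between $M(L)$ and $M(C(L,\omega))$ using $\theta(p\otimes 1_\KK)=p$ and \cite[Proposition~1.48]{enchilada}; both arguments are valid and of comparable length.
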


Note that in the final statement of the above proposition we turned the $C(C\otimes\KK,1_C\otimes\id)-C(D\otimes\KK,1_D\otimes\id)$ Hilbert bimodule $C(Y\otimes\KK,1_C\otimes\id,1_D\otimes\id)$ into a $C-D$ Hilbert bimodule
via the isomorphisms $C(C\otimes\KK,1_C\otimes\id)\simeq C$ and $C(D\otimes\KK,1_D\otimes\id)\simeq D$.

\begin{proof}
(1)
This could be proven by adapting 
the arguments in the proof of \propref{AK}
--- for example, we would start with $X_{ij}=e_{ii}\cdot X\cdot f_{jj}$,
where $\{e_{ij}\}$ and $\{f_{ij}\}$ are matrix units associated to $\iota$ and $\zeta$, respectively.
But it is instructive to instead \emph{apply} 
\propref{AK}
to the linking algebra.
Let $L=L(X)=\smtx{A&X\\\wilde X&B}$ be the linking algebra of the Hilbert bimodule $X$,
with associated complementary  projections
\[
p=\mtx{1_A&0\\0&0}\midtext{and}q=\mtx{0&0\\0&1_B}
\]
in $M(L)$.
Then
\[
\omega(k)=\mtx{\iota(k)&0\\0&\zeta(k)}
\]
gives a nondegenerate homomorphism $\omega\colon \KK\to M(L)$,
and
\[
g_{ij}:=\mtx{e_{ij}&0\\0&f_{ij}}
\]
is an associated system
of matrix units.
Thus, by \propref{AK}
there is an isomorphism
\[
\theta\colon C(L,\omega)\otimes\KK\variso L
\]
given on elementary tensors by
\[
\theta(c\otimes k)=c\omega(k).
\]

We easily see that $p\in M(C(L,\omega))$:
for all $i,j$,
\begin{align*}
g_{ij}p
&=\mtx{e_{ij}&0\\0&f_{ij}}\mtx{1&0\\0&0}
=\mtx{e_{ij}&0\\0&0}
=\mtx{1&0\\0&0}\mtx{e_{ij}&0\\0&f_{ij}}
=pg_{ij}.
\end{align*}
Thus $p$ and $q=1-p$ are complementary  projections in $M(C(L,\omega))$,
so
$pC(L,\omega) q$ is a $pC(L,\omega) p-qC(L,\omega) q$ Hilbert bimodule.

Claim:
\[
C(L,\omega)=\mtx{C(A,\iota)&C(X,\iota,\zeta)\\{*}&C(B,\zeta)},
\]
where we no longer bother to specify the lower left corners of linking algebras.
By, for example, \cite[Proposition~1.51]{enchilada},
\[
M(L)=\mtx{M(A)&M(X)\\{*}&M(B)}.
\]
Thus
$pM(L)p=M(A)$.
Consequently, 
\[
pC(L,\omega) p=\{a\in M (A):g_{ij}\smtx{a&0\\0&0}=\smtx{a&0\\0&0}g_{ij}\in L\text{ for all }i,j\}.
\]
Since 
\[
g_{ij}\mtx{a&0\\0&0}=\mtx{e_{ij}&0\\0&f_{ij}}\mtx{a&0\\0&0}=\mtx{e_{ij}a&0\\0&0},
\]
and similarly $\smtx{a&0\\0&0}g_{ij}=\smtx{ae_{ij}&0\\0&0}$,
and since $L\cap M(A)=A$,
we see that $pC(L,\omega) p=C(A,\iota)$.
Similar computations 
show that $qC(L,\omega) q=C(B,\zeta)$ and $pC(L,\omega) q=C(X,\iota,\zeta)$,
proving the claim.
It follows that
$C(X,\iota,\zeta)$ is a $C(A,\iota)-C(B,\zeta)$ Hilbert bimodule,
with linking algebra
\[
C(L,\omega)=L(C(X,\iota,\zeta)).
\]

(2)
Note that the linking algebra of the $(C(A,\iota)\otimes\KK)-(C(B,\zeta)\otimes\KK)$ Hilbert bimodule $C(X,\iota,\zeta)\otimes\KK$ is
\begin{align*}
L(C(X,\iota,\zeta)\otimes\KK)
&=\mtx{C(A,\iota)\otimes\KK&C(X,\iota,\zeta)\otimes\KK\\{*}&C(B,\zeta)\otimes\KK}
\\&\simeq L(C(X,\iota,\zeta))\otimes\KK
=C(L,\omega)\otimes\KK
\end{align*}
(see, e.g., \cite[Remark~1.50]{enchilada}).
The isomorphism $\theta\colon C(L,\omega)\otimes\KK\variso L$ 
of \propref{AK}
restricts on the upper right corner to an isomorphism of
$(C(A,\iota)\otimes\KK)-(C(B,\zeta)\otimes\KK)$ Hilbert bimodules.
The upper right corner of $L(C(X,\iota,\zeta)\otimes\KK)$ is
$C(X,\iota,\zeta)\otimes\KK$,
and the upper right corner of $L$ is
$X$.
For all $x\in C(X,\iota,\zeta)=pC(L,\omega) q$
we have
\[
x\otimes g_{ij}=x\otimes f_{ij}
\midtext{and}xg_{ij}=x\cdot f_{ij}.
\]
Thus $\theta$ restricts on the upper right corner to an isomorphism $C(X,\iota,\zeta)\otimes\KK\simeq X$
such that $x\otimes f_{ij}\mapsto x\cdot \zeta(f_{ij})$,
and (2) follows.

(3)
We prove that if $X$ is right-full then so is $C(X,\iota,\zeta)$; the proof on the other side is similar, and we omit it.
By
\cite[Proposition~1.48]{enchilada},
since $X$ is right-full the projection $p$ is full in $M(L)$,
and
it suffices to show that $p$ is full in $M(C(L,\omega))$.

Note that the canonical extension to multiplier algebras of the isomorphism $\theta\colon C(L,\omega)\otimes\KK\to L$ takes the same form on elementary tensors:
$\theta(c\otimes k)=c\omega(k)$ for all $c\in M(C(L,\omega)),k\in M(\KK)$.
Thus
\[
\theta(p\otimes 1_\KK)=p,
\]
so
$\theta\inv$ takes the  projection $p$ in $M(L)$ to $p\otimes 1$.
Thus $p\otimes 1$ is full in $M(C(L,\omega)\otimes\KK)$, and
it follows that $p$ is a full projection in $M(C(L,\omega))$.

Finally, let $Y$ be a $C-D$ Hilbert bimodule.
Then from the above we have a $C(C\otimes\KK,1_C\otimes\id)-C(D\otimes\KK,1_D\otimes\id)$ Hilbert bimodule $C(Y\otimes\KK,1_C\otimes\id,1_D\otimes\id)$,
which we regard as a $C-D$ Hilbert bimodule via the isomorphisms $C(C\otimes\KK,1_C\otimes\id)\simeq C$ and $C(D\otimes\KK,1_D\otimes\id)\simeq D$.
It follows that we have isomorphisms
\begin{align*}
&\mtx{C&C(Y\otimes\KK,1_C\otimes\id,1_D\otimes\id)\\{*}&D}
\\&\quad\simeq\mtx{C(C\otimes\KK,1_C\otimes\id)&C(Y\otimes\KK,1_C\otimes\id,1_D\otimes\id)\\{*}&C(D\otimes\KK,1_D\otimes\id)}
\\&\quad=L\bigl(C(Y\otimes\KK,1_C\otimes\id,1_D\otimes\id)\bigr)
\\&\quad=C\left(L(Y\otimes\KK),\mtx{1_C\otimes\id&0\\0&1_D\otimes\id}\right)
\righttext{(by (1))}
\\&\quad\simeq C\left(L(Y)\otimes\KK,1_{L(Y)}\otimes\id\right)
\\&\quad\simeq L(Y)
\\&\quad=\mtx{C&Y\\{*}&D}
\end{align*}
of $C^*$-algebras that preserve the projection $\smtx{1_C&0\\0&0}$,
and it follows that
$C(Y\otimes\KK,1_C\otimes\id,1_D\otimes\id)\simeq Y$ as $C-D$ Hilbert bimodules.
\end{proof}

\section{Enchilada category equivalence}\label{enchilada}

We now want an analogue of the equivalence of \thmref{destabilize}, but for the enchilada categories.

\begin{defn}
The \emph{enchilada category $\kalge$ of  $\KK$-algebras}
has the same objects as $\kalgn$,
but now when we say $[X]\colon (A,\iota)\to (B,\zeta)$ is a morphism in the category we mean 
$[X]\colon A\to B$ in $\cse$.
\end{defn}

\begin{rem}
In the spirit of \cite{hkrwnatural}, we could call $\kalge$ a ``semi-comma category'' --- it is not quite the comma category of objects of $\cse$ under $\KK$, because we don't require the morphisms to be in any way compatible with the embeddings of $\KK$.
\end{rem}

If $X$ is a nondegenerate $A-B$ correspondence,
it is well-known that the external tensor product
$X\otimes\KK$ is a nondegenerate $(A\otimes\KK)-(B\otimes\KK)$-correspondence.
If $\phi\colon X\to Y$ is an isomorphism of (nondegenerate) $A-B$ correspondences,
then $\phi\otimes\id_\KK\colon X\otimes\KK\to Y\otimes\KK$ is an isomorphism of $(A\otimes\KK)-(B\otimes\KK)$ correspondences.
By
\cite[Lemma~2.12]{enchilada},
if we have an
$A-B$ correspondence $X$ and a 
$B-C$ correspondence $Y$ (both nondegenerate),
then there is an $(A\otimes\KK)-(C\otimes\KK)$ correspondence isomorphism
\[
\Theta\colon (X\otimes\KK)\otimes_{B\otimes\KK}(Y\otimes\KK)\variso (X\otimes_BY)\otimes\KK
\]
such that
\[
\Theta\bigl((x\otimes k)\otimes (y\otimes m)\bigr)=(x\otimes y)\otimes km
\righttext{for}x\in X,y\in Y,k,m\in\KK.
\]
This justifies the following:

\begin{defn}
The \emph{enchilada stabilization functor} $\ste\colon \cse\to \cse$
is the same as $\stn$ on objects,
but is given on morphisms by
$[X]\mapsto [X\otimes\KK]$,
and the
\emph{embellished enchilada stabilization functor} $\wste\colon \cse\to \kalge$
is the same as $\wstn$ on objects,
and is the same as $\ste$ on morphisms.
\end{defn}

\begin{thm}\label{destabilize enchilada}
The embellished enchilada stabilization functor $\wste\colon \cse\to\kalge$
is a category equivalence,
and has a 
unique 
quasi-inverse functor
whose object map is $(A,\iota)\mapsto C(A,\iota)$.
\end{thm}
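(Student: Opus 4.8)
The plan is to run the proof of \thmref{destabilize} essentially verbatim, with \propref{AK} replaced by a version for correspondences. Since $\cse$ and $\kalge$ have the same objects as $\csn$ and $\kalgn$, and since a morphism in $\kalge$ is merely a morphism in $\cse$ with no compatibility imposed between the given copies of $\KK$, it again suffices --- by \cite[Section~IV.4, Theorem~1]{maclane} and generalized abstract nonsense --- to prove that $\wste$ is full, faithful, and essentially surjective. Essential surjectivity needs no new work: for a $\KK$-algebra $(A,\iota)$ the $C^*$-isomorphism $\theta\colon C(A,\iota)\otimes\KK\variso A$ of \propref{AK}, viewed via its associated imprimitivity bimodule, is an isomorphism $[\theta]\colon (C(A,\iota)\otimes\KK,1\otimes\id)\to(A,\iota)$ in $\cse$, hence in $\kalge$. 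In particular $\wste(C(A,\iota))\simeq(A,\iota)$, which also pins down the object map of the quasi-inverse as $(A,\iota)\mapsto C(A,\iota)$.

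Fullness and faithfulness together say exactly that, for $C^*$-algebras $A$ and $B$, the stabilization map $[X]\mapsto[X\otimes\KK]$ --- from isomorphism classes of nondegenerate $A-B$ correspondences to isomorphism classes of nondegenerate $(A\otimes\KK)-(B\otimes\KK)$ correspondences --- is a bijection. We would get this by proving the analogue of \propref{XK} for correspondences: for a nondegenerate $(A\otimes\KK)-(B\otimes\KK)$ correspondence $Z$, the set
\[
C(Z,1\otimes\id,1\otimes\id)=\{z\in M(Z):(1_A\otimes k)\cdot z=z\cdot(1_B\otimes k)\in Z\text{ for all }k\in\KK\}
\]
is a nondegenerate correspondence over $C(A\otimes\KK,1\otimes\id)\simeq A$ and $C(B\otimes\KK,1\otimes\id)\simeq B$; there is a natural isomorphism $C(Z,1\otimes\id,1\otimes\id)\otimes\KK\variso Z$ of $(A\otimes\KK)-(B\otimes\KK)$ correspondences; and dually $C(X\otimes\KK,1\otimes\id,1\otimes\id)\simeq X$ for every nondegenerate $A-B$ correspondence $X$. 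The first statement makes the stabilization map surjective; combined with the second and the evident fact that $C(-,1\otimes\id,1\otimes\id)$ descends to isomorphism classes, it makes the stabilization map injective.

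For the correspondence analogue itself, the plan is to imitate the matrix-unit computation in the proof of \propref{AK}, carried out inside the multiplier bimodule $M(Z)$ rather than via a linking algebra --- the linking-algebra shortcut of \propref{XK} is unavailable, since a correspondence carries no left inner product. Fixing matrix units $\{u_{ij}\}$ for $\KK$, let $e_{ij}$ and $f_{ij}$ be the left action of $1_A\otimes u_{ij}$ and the right action of $1_B\otimes u_{ij}$ on $M(Z)$ (both preserving $Z$), and set $Z_{ij}=e_{ii}\cdot Z\cdot f_{jj}$. One checks that $Z$ is the inductive limit of the increasing family of sub-correspondences $\sum_{i,j=1}^n Z_{ij}$ over $A\otimes M_n$ and $B\otimes M_n$, that the $Z_{ij}$ are linearly independent, that there are isometric linear bijections $\tau_{ij}\colon Z_{ij}\to Z_{11}$ given by $\tau_{ij}(z)=e_{1i}\cdot z\cdot f_{j1}$, and hence that $Z\simeq Z_{11}\otimes\KK$ as $(A\otimes\KK)-(B\otimes\KK)$ correspondences; finally $z\mapsto z\cdot f_{11}$ identifies the relative commutant $C(Z,1\otimes\id,1\otimes\id)$ with $Z_{11}$, with inverse $z\mapsto\sum_i e_{i1}\cdot z\cdot f_{1i}$. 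Putting these together yields $C(Z,1\otimes\id,1\otimes\id)\otimes\KK\simeq Z$, and specializing to $Z=X\otimes\KK$ (where now $Z_{ij}=X\otimes u_{ij}$) yields $C(X\otimes\KK,1\otimes\id,1\otimes\id)\simeq X$. With $\wste$ full, faithful, and essentially surjective, generalized abstract nonsense then produces the unique quasi-inverse whose object map is $(A,\iota)\mapsto C(A,\iota)$, along with its morphism map.

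The main obstacle is precisely the correspondence version of \propref{XK}: one must verify that the matrix-unit machinery of \propref{AK} survives the passage from $C^*$-algebras to plain correspondences, where the absence of a linking algebra forces one to argue directly with $M(Z)$ and with right actions that need not be adjointable. The points requiring care are that $Z$ really is the inductive limit of the $\sum_{i,j=1}^n Z_{ij}$, that $Z_{11}$ (equivalently $C(Z,1\otimes\id,1\otimes\id)$) is nondegenerate both as a correspondence and inside $M(Z)$, and that the resulting isomorphism $Z\simeq Z_{11}\otimes\KK$ simultaneously intertwines the left $A\otimes\KK$-action, the right $B\otimes\KK$-action, and the $(B\otimes\KK)$-valued inner product. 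Once that is established, everything else is formal, parallel to \secref{equivalence}.
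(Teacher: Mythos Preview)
Your overall structure is correct and matches the paper's: reduce to showing $\wste$ is full, faithful, and essentially surjective, with essential surjectivity inherited from \propref{AK}. Where you diverge is in the handling of full-and-faithful. You propose to prove a correspondence analogue of \propref{XK} from scratch, by rerunning the matrix-unit computations of \propref{AK} inside $M(Z)$ for a general nondegenerate $(A\otimes\KK)-(B\otimes\KK)$ correspondence $Z$. The paper instead avoids reproving anything for correspondences: given such a $Z$, it passes to the imprimitivity algebra $E=\KK(Z)$, views $Z$ as a left-full $E-B$ Hilbert bimodule, applies the already-established \propref{XK} (via linking algebras) to obtain $C(Z,\iota_E,\zeta)\otimes\KK\simeq Z$ as Hilbert bimodules, and then separately checks that the left $A$-module structure is preserved using the nondegenerate destabilization functor $\dstn$ applied to the structure map $A\to M(E)$. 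The same $E=\KK(X)$ reduction handles injectivity.

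Both routes are valid. The paper's buys economy --- it recycles \propref{XK} and the linking-algebra machinery wholesale, at the cost of two extra $A$-equivariance verifications (one for surjectivity, one for injectivity, the latter via an approximate-identity argument in $M(L(X))$). Your route is more self-contained and conceptually uniform with \propref{AK}, but you are right to flag the technical burden: without a linking algebra you must verify directly that the strict sums $\sum_i e_{i1}\cdot z\cdot f_{1i}$ converge in $M(Z)$, that the resulting $Z_{11}$ is nondegenerate as a left $A_{11}$-module, and that the assembled isomorphism respects all three pieces of correspondence structure simultaneously. None of this is deep, but it is exactly the bookkeeping the paper sidesteps by outsourcing to the Hilbert-bimodule case.
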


\begin{proof}
We follow the same overall strategy as in \thmref{destabilize}:
we show that 
$\wste$
is full and faithful,
and that for every $\KK$-algebra $(A,\iota)$ we have
\[
(C(A,\iota)\otimes\KK,1_A\otimes\id_\KK)\simeq (A,\iota)
\righttext{in}\kalge.
\]
This latter part, a particular way for stabilization to be essentially surjective that by generalized abstract nonsense will also take care of the second statement of the theorem,
follows immediately from its earlier counterpart \propref{AK},
because isomorphism of objects in the nondegenerate category is stronger than isomorphism in the enchilada category.

So, it remains to show that
the map
\begin{equation}\label{1-1 onto}
[X]\mapsto [X\otimes\KK]
\end{equation}
from
$\mor(A,B)$
to
$\mor\bigl((A\otimes\KK,1_A\otimes\id_\KK),(B\otimes\KK,1_B\otimes\id_\KK)\bigr)$
is bijective.

For the surjectivity of \eqref{1-1 onto},
we argue slightly more generally:
let $(A,\iota)$ and $(B,\zeta)$ be $\KK$-algebras and let $X$ be a nondegenerate $A-B$ correspondence.
We will show that there is a nondegenerate $C(A,\iota)-C(B,\zeta)$ correspondence $C(X,\iota,\zeta)$ such that
\[
C(X,\iota,\zeta)\otimes\KK\simeq X
\]
as $A-B$ correspondences
(where the $(C(A,\iota)\otimes\KK)-(C(B,\zeta)\otimes\KK)$ correspondence $C(X,\iota,\zeta)\otimes\KK$ is turned into an $A-B$ correspondence with the help of the isomorphisms
$C(A,\iota)\otimes\KK\simeq A$ and $C(B,\zeta)\otimes\KK\simeq B$).
To apply this to the surjectivity question,
given a $(C\otimes\KK)-(D\otimes\KK)$ correspondence $X$,
find a $C(C\otimes\KK,\iota)-C(D\otimes\KK,\zeta)$ correspondence $C(X,\iota,\zeta)$ such that
$C(X,\iota,\zeta)\otimes\KK\simeq X$ as
$(C\otimes\KK)-(D\otimes\KK)$ correspondences,
then regard $C(X,\iota,\zeta)$ as a $C-D$ correspondence via the isomorphisms
$C(C\otimes\KK,\iota)\simeq C$ and $C(D\otimes\KK,\zeta)\simeq D$.

Now,
$X$ can also be regarded as a left-full $E-B$ Hilbert bimodule, where $E=\KK(X)$.
The left-module multiplication of $A$ on $X$ is given by a nondegenerate homomorphism
\[
\phi\colon A\to \LL(X),
\]
and then the composition
\[
\iota_E:=\phi\circ\iota\colon \KK\to M(E)
\]
is nondegenerate, giving $E$ the structure of a $\KK$-algebra.
Let $C(X,\iota_E,\zeta)$ be the left-full $C(E,\iota_E)-C(B,\zeta)$ Hilbert bimodule from \propref{XK}.
Letting
$C(\phi)\colon C(A,\iota)\to M(C(E,\iota_E))$ be the nondegenerate homomorphism 
given by the nondegenerate destabilization functor $\dstn$,
$C(X,\iota_E,\zeta)$ can be regarded as a nondegenerate $C(A,\iota)-C(B,\zeta)$ correspondence,
and we shorten the notation to $C(X,\iota,\zeta)$.
From \propref{XK} we have an isomorphism
\[
\theta_X\colon C(X,\iota,\zeta)\otimes\KK\variso X
\]
of $E-B$ Hilbert bimodules,
and we need to show that $\theta_X$ preserves the left $A$-module structures,
so that we will have $[C(X,\iota,\zeta)\otimes\KK]=[X]$ as morphisms from $A$ to $B$ in $\kalge$.
Due to the isomorphism $\theta_A\colon C(A,\iota)\otimes\KK\variso A$ of \propref{AK},
the following computation suffices:
for $a\in C(A,\iota)$, $x\in X$, and $k,m\in\KK$ we have
\begin{align*}
\theta_X\bigl(\theta_A(a\otimes k)\cdot (x\otimes m)\bigr)
&=\theta_X\bigl((a\otimes k)\cdot (x\otimes m)\bigr)
\\&=\theta_X(a\cdot x\otimes km)
\\&=(a\cdot x)\cdot \zeta(km)
\\&=a\cdot x\cdot \zeta(k)\zeta(m)
\\&=a\cdot \iota_E(k) x\cdot \zeta(m)
\\&=a\cdot \iota(k)\cdot x\cdot \zeta(m)
\\&=\bigl(a\iota(k)\bigr)\cdot x\cdot \zeta(m)
\\&=\theta_A(a\otimes k)\cdot \theta_X(x\otimes m).
\end{align*}
Note that the definition of the relative-commutant Hilbert bimodule from \propref{XK} is
\[
C(X,\iota,\zeta)=\{x\in M(X):\iota_E(k)x=x\cdot \zeta(k)\in X\text{ for all }k\in\KK\},
\]
but this can be computed without appealing to the imprimitivity algebra $E$:
it follows from the construction that
\[
C(X,\iota,\zeta)=\{x\in M(X):\iota(k)\cdot x=x\cdot \zeta(k)\in X\text{ for all }k\in\KK\}.
\]

Finally, for the injectivity of \eqref{1-1 onto},
it suffices to show that if $[X]$ is a morphism from $A$ to $B$ in $\cse$,
and if we 
let $\iota=1_A\otimes\id_\KK$ and $\zeta=1_B\otimes\id_\KK$,
and
regard $C(X\otimes\KK,\iota,\zeta)$ as an $A-B$ correspondence
via the isomorphisms $C(A\otimes\KK,\iota)\simeq A$ and $C(B\otimes\KK,\zeta)\simeq B$,
then $[X]=[C(X\otimes\KK,\iota,\zeta)]$ in $\cse$,
i.e.,
\[
C(X\otimes\KK,\iota,\zeta)\simeq X\righttext{as $A-B$ correspondences}.
\]
Letting $E=\KK(X)$,
we know from \propref{XK} that there is an isomorphism
\[
\sigma\colon C(X\otimes\KK,\iota,\zeta)\variso X
\]
of $E-B$ Hilbert bimodules, and we need to know that it preserves the left $A$-module structures.
It will help to be a little more precise concerning $\sigma$:
the last part of the proof of \propref{XK} gives an isomorphism
\[
\psi=\mtx{\id_E&\sigma\\{*}&\id_B}\colon \mtx{E&C(X\otimes\KK,\iota,\zeta)\\{*}&B}\to \mtx{E&X\\{*}&B}
\]
of matrix algebras.
Let $a\in A$ and $y\in C(X\otimes\KK,\iota,\zeta)$,
and let $\{e_i\}$ be an approximate identity for $E$. Then
\begin{align*}
\mtx{0&\sigma(a\cdot y)\\0&0}
&=\lim \mtx{0&\sigma\bigl(\phi_{C(X\otimes\KK,\iota,\zeta)}(a)e_iy\bigr)\\0&0}
\\&=\lim \mtx{0&\phi_X(a)e_i\sigma(y)\\0&0}
\\&=\lim \mtx{\phi_X(a)e_i&0\\0&0}\mtx{0&\sigma(y)\\0&0}
\\&=\mtx{\phi_X(a)&0\\0&0}\mtx{0&\sigma(y)\\0&0},
\intertext{by the strict-continuity clause of \cite[Proposition~1.51]{enchilada},}
\\&=\mtx{0&\phi_X(a)\sigma(y)\\0&0}
\\&=\mtx{0&a\cdot \sigma(y)\\0&0},
\end{align*}
so
\[
\sigma(a\cdot y)=a\cdot \sigma(y).
\qedhere
\]
\end{proof}

\begin{rem}
\thmref{destabilize enchilada} gives an inversion (in the technical sense of \cite[Definition~4.1]{koqlandstad}) of the enchilada stabilization functor
$\ste$
However, this inversion is \emph{not} good.
To see why, just note that
a $C^*$-algebra 
can be isomorphic
in the category $\cse$ to something in the image of the forgetful functor
$(A,\iota)\mapsto A$
without itself being in this image,
since nonstable $C^*$-algebras can be Morita equivalent to stable ones,
and
so
the image of the forgetful functor $\kalge\to \cse$ is not contained in the essential image of $\ste\colon \cse\to \cse$.
However, similarly to \cite[Remark~5.6]{koqlandstad}, the forgetful functor is conservative (and hence so is $\stn$) and essentially surjective.
\end{rem}

\begin{rem}
Perhaps it is of interest to note that our use of category-theory techniques obviated the need to directly establish that 
the assignments $X\mapsto C(X,\iota,\zeta)$ from $\KK$-compatible $A-B$ correspondences to $C(A,\iota)-C(B,\zeta)$ correspondences
is functorial up to isomorphism;
this would have required that we prove an isomorphism of the form
\[
C(X\otimes_B Y,\iota)\simeq C(X,\iota,\zeta)\otimes_{C(B,\zeta)} C(Y,\iota),
\]
but in fact it follows from the properties of category equivalences. 
\end{rem}

\subsection{Restricting to stable algebras}
Recall that isomorphism in the nondegenerate category is the usual isomorphism of $C^*$-algebras,
whereas isomorphism in the enchilada category is Morita equivalence of $C^*$-algebras.
By \cite[Theorem~3.4]{bgr}, every imprimitivity bimodule between stable $C^*$-algebras possessing strictly positive elements (in particular, between separable stable algebras) is isomorphic to one coming from an isomorphism between the $C^*$-algebras.
Since we are considering the stabilization process, it is natural to ponder the ramifications for our two categories.

A morphism in $\csn$
is a nondegenerate homomorphism $\phi\colon A\to M(B)$,
and this determines a \emph{standard} $A-B$ correspondence $B_\phi$,
where the Hilbert $B$-module structure comes from the algebraic operations of $B$
and the left $A$-module structure comes from $\phi$.
Two standard $A-B$ correspondences $B_\phi$ and $B_\psi$ are isomorphic if and only if there is a unitary $u\in M(B)$ such that $\psi=\ad u\circ\phi$
\cite[Proposition~2.3]{taco}
(see also \cite[Corollary~3.2]{bgr}).
It follows that there is a functor $E\colon \csn\to\cse$ that is the identity on objects and takes $\phi$ to $[B_\phi]$.
This functor $E$ is 
trivially essentially surjective, but it is
not faithful, and is certainly not full, since nonisomorphic $C^*$-algebras can be Morita equivalent.

We wish to make the point here that $E$ becomes full (in a nontrivial way) if we restrict to a suitable subcategory.
In view of the Brown-Green-Rieffel theorem,
it seems clear that we should restrict to stable $C^*$-algebras possessing strictly positive elements.
Our strategy is to ``factor'' a correspondence into a homomorphism and an imprimitivity bimodule.
Given a nondegenerate $A-B$ correspondence $X$, we have a nondegenerate homomorphism $\varphi\colon A\to \LL(X)=M(\KK(X))$,
and $X$ is also a left-full $\KK(X)-B$ Hilbert bimodule.
We actually want an imprimitivity bimodule, so we should restrict to full correspondences.

But there is a subtlety: although we've already agreed that we'll require $A$ and $B$ to have strictly positive elements, there's no reason to expect that $\KK(X)$ will have any --- for example, we could have $A=B=\C$, and $X$ could be a nonseparable Hilbert space, and then $\KK(X)$ does not have a strictly positive element (essentially because every compact operator has separable range).
In fact, assuming $B$ has a strictly positive element, we do not see any obvious condition on a Hilbert $B$-module $X$ that would guarantee that $\KK(X)$ has a strictly positive element too.
To simplify the discussion, therefore, we stick to separable $C^*$-algebras and correspondences. This removes the preceding worry, since if $X$ is a separable Hilbert $B$-module and $B$ is separable, then $\KK(X)$ is separable also.

So, we restrict to the full subcategory $\sscs$
of $\csn$ whose objects are the separable stable $C^*$-algebras.
The functor $E$ takes $\sscs$ into the (nonfull) subcategory $\ssfcsen$ of $\cse$ whose objects are again separable and stable, 
and whose morphisms are isomorphism class of \emph{full} nondegenerate separable $C^*$-correspondences.
It is an easy consequence of the Brown-Green-Rieffel theorem that in fact $E$ maps $\sscs$ \emph{onto} $\ssfcsen$.
To see this, let $X$ be a full nondegenerate separable $A-B$ correspondence.
Then $\KK(X)$ is separable, and
$X$ is a $\KK(X)-B$ imprimitivity bimodule,
so by \cite[Theorem~3.4]{bgr} there exist isomorphisms
\[
\theta\colon \KK(X)\variso B
\]
and
\[
\Theta\colon X\variso B_\theta.
\]
Letting $\varphi\colon A\to M(\KK(X))=\LL(X)$ be the left $A$-module structure,
we get a full nondegenerate $A-B$ correspondence $B_{\theta\circ\varphi}$.
Obviously $\Theta$ preserves the left $A$-module actions,
so gives an isomorphism of $A-B$ correspondences.
Thus
\[
[X]=E\bigl(B_{\theta\circ\varphi}\bigr)\righttext{in $\ssfcsen$}.
\]

\subsection{Double crossed products}
There is a connection to crossed-product duality:
for an
infinite
second-countable
locally compact group $G$, the result
\cite[Proposition~5.3]{cldx}, translated into the context of full-crossed-product duality, can be restated as follows: the assignments
\begin{equation}\label{double}
(A,\alpha)\mapsto (A\rtimes_\alpha G\rtimes_{\what\alpha} G,\what{\what\alpha},i_G\rtimes G)
\midtext{and}
\phi\mapsto \phi\rtimes G\rtimes G
\end{equation}
give an equivalence of the nondegenerate category of $G$-actions with an equivariant category of $\KK$-algebras.
Here
\[
i_G\rtimes G\colon C^*(G)\rtimes_{\delta_G} G\to M(A\rtimes_\alpha G\rtimes_{\what\alpha} G)
\]
is the crossed product of the equivariant homomorphism $i_G\colon C^*(G)\to M(A\rtimes_\alpha G)$.
The functor \eqref{double} can be regarded as the composition of two ``crossed-product-type'' functors:
\begin{enumerate}
\item
$(A,\alpha)\mapsto (A\rtimes_\alpha G,\what\alpha,i_G)$
from the nondegenerate category actions to a nondegenerate category of ``equivariant'' coactions,
where a typical object  $(B,\delta,V)$ comprises a $\delta_G-\delta$ equivariant nondegenerate homomorphism $V\colon C^*(G)\to M(B)$ (see \cite[Section~2]{koqlandstad}, and

\item
$(B,\delta,V)\mapsto (B\rtimes_\delta G,\what\delta,V\rtimes G)$
from the above category of equivariant coactions to the equivariant category of $\KK$-algebras.
\end{enumerate}
Anyhow, using the canonical isomorphism
\[
(C^*(G)\rtimes_{\delta_G} G,\what{\delta_G})\simeq (\KK,\ad\rho),
\]
where $\KK=\KK(L^2(G))$ and $\rho$ is the right regular representation of $G$ on $L^2(G)$,
the homomorphism
\[
V\rtimes G\colon C^*(G)\rtimes_{\delta_G} G\to M(B\rtimes_\delta G)
\]
can be identified with an $\ad\rho-\what\delta$ equivariant homomorphism $\KK\to M(B\rtimes_\delta G)$,
and this is how the crossed-product-type functor in (2) above can be regarded as going from equivariant coactions to equivariant $\KK$-algebras.

On the other hand, the assignments
\[
(A,\alpha)\mapsto (A\otimes\KK,\alpha\otimes\ad\rho,1_{M(A)}\otimes\id_\KK)
\midtext{and}
\phi\mapsto \phi\otimes\id_\KK
\]
give another equivalence of the nondegenerate category of actions to the equivariant category of $\KK$-algebras.
The canonical map
\[
\Phi\colon A\rtimes_\alpha G\rtimes_{\what\alpha} G\to A\otimes\KK
\]
gives a natural isomorphism between these two equivalences.

In \cite[Proposition~5.3]{cldx} there is no specified quasi-inverse,
but the techniques of this paper show that
we can recover $A$ up to isomorphism from the double crossed product $A\rtimes_\alpha G\rtimes_{\what\alpha} G$ as the relative commutant
\[
C(A\rtimes_\alpha G\rtimes_{\what\alpha} G,i_G\rtimes G).
\]
In fact, this is how Fischer constructs the maximalization of a coaction in \cite{fischer}.
Of course, the corresponding isomorphic copy of the given action $\alpha$ is then recovered by restricting to the relative commutant of (the canonical extension to multipliers of) the inner action
$\ad (j_{A\rtimes_\alpha G}\circ i_G)$.

A similar discussion can be given for the dual situation, starting with a maximal coaction $(A,\delta)$ and producing the equivariant $\KK$-algebra
\[
(A\rtimes_\delta G\rtimes_{\what\delta} G,\what{\what\delta},j_G\rtimes G).
\]


\providecommand{\bysame}{\leavevmode\hbox to3em{\hrulefill}\thinspace}
\providecommand{\MR}{\relax\ifhmode\unskip\space\fi MR }
\providecommand{\MRhref}[2]{%
  \href{http://www.ams.org/mathscinet-getitem?mr=#1}{#2}
}
\providecommand{\href}[2]{#2}

\end{document}